\newtheorem{theorem}{Theorem}[section]
\newtheorem{corollary}[theorem]{Corollary}
\newtheorem{definition}[theorem]{Definition}
\def\Bbb#1{{\fam\msbfam\relax#1}}
\font\fivemsb=msbm5
\font\sevenmsb=msbm7
\font\tenmsb=msbm10
\def\Om{\Omega}
\def\om{\omega}
\def\s{\sigma}
\def\l{\lambda}
\def\m{\mu}
\def\r{\rho}
\def\la{\langle}
\def\ra{\rangle}
\def\wt{\widetilde}
\def\mmm{\!\!\!}
\def\sqbox{{\vcenter{\hrule height .5pt \hbox{\vrule
   width .5pt height 5pt \kern 5pt \vrule width .5pt}
   \hrule height .5pt}}}
\def\cab{\Omega}
\def\galgal#1{\langle{#1}\rangle}
\def\mmm{\!\!\!}
\begin{document}


\title{Wiener integrals with respect to  Yeh processes}

\author{Jae Gil Choi\thanks{{\color{red}This work was supported by
    the Korea Research Foundation Grant. KRF-2005-214-C00004.}}\\
        Department of Mathematics\\
        Louisiana State University\\
        Baton Rouge, LA 70803, USA\\
        jgchoi@math.lsu.edu}

\date{{\color{red}July 2006}}
\maketitle

\begin{abstract}
We define  Wiener integrals
with respect to Yeh processes and study their
properties. In particular, we obtain the
martingale property of the associated stochastic
processes and give a series expansion of Wiener
integrals with respect to centered Yeh process.
Moreover, we derive a representation of an Yeh
process in terms of a random series.

\par\vskip .1 true in 

\noindent
{\bf Keywords and phrases}.
Yeh process, Wiener integral, martingales, centered Yeh process.\

\par\vskip .1 true in 


\noindent
{\bf 2000 Mathematics Subject Classification}:
Primary 60G15, 60H05. \par\vskip .1 true in 

\end{abstract}


\setcounter{equation}{0}
\section{Introduction} \label{sec:1}

\par
The theory of stochastic integrals and stochastic differential
equations was initiated and developed by K. It\^o \cite{Ito44}
\cite{Ito51O}. There has been a tremendous amount of papers
and books in the literature on the It\^o theory.
For an elementary introduction, see the recent book \cite{Kuo06}.


\par
Let $B(t), \,t\geq 0,\,\om\in\Om,$ be a Brownian motion
and let $[a, b]\subset [0, \infty)$ be a finite interval.
Since with probability one the function $t \mapsto B(t)$ is
nowhere       differentiable,
the integral $\int_a^b f(t)\,dB(t)$ can be defined pathwise
by the ordinary calculus only for a very small class of
deterministic functions $f(t)$. However, by using the special
properties of a Brownian motion, we can define the
Wiener integral $\int_a^b f(t)\,dB(t)$ for any deterministic
function $f$ in $L^2[a, b]$. Moreover, the Wiener integral
can be extended to the It\^o stochastic integral
$\int_a^b f(t)\,dB(t)$
for stochastic processes $f(t, \om)$ satisfying certain
conditions (see Chapters 4 and 5 in \cite{Kuo06}).


\par
In this paper we will extend the Wiener integral
from a Brownian motion to a more general stochastic process
defined in \cite{Yeh}, which we call an Yeh process. An
{\em Yeh process} on $[a, b]$ is a continuous additive
stochastic process $X(t, \om),\, t\in [a, b], \,\om\in\Om$,
such that for any $a\leq s< t\leq b$,
$$
 X(t)-X(s) \sim N\big(\l(t)-\l(s), \,\r(t)-\r(s)\big),
$$
where $N(\m, \s^2)$ denotes the normal distribution
with mean $\m$ and variance $\s^2$,
$\lambda$ is a continuous real-valued function
on  $[a,b]$  
and $\rho(t)$ is a continuous monotonically increasing
real-valued function on $[a, b]$. 
Thus an Yeh process
is determined by the functions $\l(t)$ and $\r(t)$.
We will further assume throughout this paper that
$\lambda(t)$ is a function of bounded variation
on $[a,b]$ and the measure defined by $\r(t)$ is
equivalent to the Lebesgue measure on $[a, b]$.
These conditions are weaker than those in the paper \cite{CCS}. 
In particular, the function $\lambda(t)$ in \cite{CCS}
is assumed to be absolutely continuous with $\lambda' \in L^2[a,b]$.
Thus we can take $\lambda(t)$ to be the Cantor function
in this paper, but not in \cite{CCS}.


\par
Note that when $\l(t)=0$ and $\r(t)=t$, the Yeh process
is a Brownian motion. On the other hand, we need to
point out that a Brownian motion  is stationary in time,
while in general an Yeh process is not stationary in time
and is subject to a shift $\lambda(t)$.


\par
Suppose $X(t)$ is an Yeh process associated with functions
$\l(t)$ and $\r(t)$ on $[a, b]$. Then we have the following
equalities:
\begin{eqnarray}
 E[X(t,\cdot)] &\mmm=\mmm&\lambda(t),
    \quad a\le t\le b,   \label{eq:1-1}\\
\noalign{\vskip .04 true in}
 E[X(s, \cdot)X(t,\cdot)] &\mmm=\mmm& \rho (s) +
   \lambda(s)\lambda(t), \quad a\le s  \le  t\le b. \label{eq:1-2}
\end{eqnarray}


\par
Next we define two Hilbert spaces needed in this paper.
Let $L_{\rho}^2[a,b]$ be the Hilbert space of functions
on $[a, b]$ given by
$$
L_{\rho}^2[a,b] =\bigg\{ f \, : \,
\int_{a}^{b} |f  (t)|^2 \,d  \rho (t) <\infty \bigg\}
$$
equipped with the inner product defined by
$$
\galgal{f, g}_{\rho}=\int_a^b f(t)g(t) \,d  \rho (t).
$$
Note that by the assumption on $\r(t)$, we have
$L_{\rho}^2[a,b] = L^2[a,b]$ as sets and the norm
$\|\cdot\|_{\r}$ is equivalent to the $L^2[a,b]$-norm
$\|\cdot\|_2$. 
Similarly, let
$$
L_{\lambda, \rho}^{2}[a,b]
  =\bigg\{  f\in L_{\rho}^{2}[a,b]:
  \int_a^b |f(t)|^2 \,d | \lambda |(t)< \infty\bigg\},
$$
where $| \lambda| $ is the total variation function of $\lambda$.
Then $L_{\lambda,\rho}^2[a,b]$  is a Hilbert space with
the inner product defined by
$$
\galgal{f,g}_{\lambda,\rho}
=\int_a^b f(t)g(t)\, d [ \rho (t) + | \lambda |(t)].
$$
It is easy to see that $\|f\|_{\lambda,\rho}=0$
if and only if $f=0$ for $m_{\rho}$-a.e. and   $f=0$ for $ m_{|\lambda|}$-a.e. 
where $m_{\rho}$ and $m_{|\lambda|}$ are Lebesgue-Stieltjes measures 
induced by $\rho$ and $|\lambda|$, respectively.


\setcounter{equation}{0}
\section{Wiener integral with respect to an Yeh process}

Let ${\cal S}[a,b]$ be the set of all step functions
on $[a,b]$,
\begin{equation}  \label{eq:2-1}
f=\sum_{i=1}^n c_{i} 1_{[t_{i-1}, t_i)} ,
\end{equation}
where $a=t_0 < t_1< \cdots < t_n=b$ and $c_{i} \in
\Bbb R$. Obviously, ${\cal S}[a,b]$ is a dense subspace
of $L_{\lambda, \rho}^{2}[a,b]$.


\par
For a step function $f(t)$ represented by Equation
(\ref{eq:2-1}), we define the Wiener integral $I(f)$
with respect to an Yeh process $X(t)$ to be the
random variable
\begin{equation*}
I(f)(\om) =\sum\limits_{i=1}^n c_i \big(X(t_i, \om)
   -X(t_{i-1}, \om)\big), \quad  \om \in \Om.
\end{equation*}
It is easy to check that $I(f)$ is well-defined,
namely, $I(f)$ is independent of the representation
of $f$ in Equation (\ref{eq:2-1}). Moreover,
$I(\alpha f +\beta g)=\alpha I(f)+\beta I(g)$ for any
$\alpha, \beta\in \Bbb R$ and $f, g \in {\cal S}[a,b]$.


\par
Using Equations (\ref{eq:1-1}) and (\ref{eq:1-2}), and the same 
ideas as in the proof of Lemma 2.3.1 in \cite{Kuo06},
we have the following theorem.


\begin{theorem} \label{thm:2-1}
  For $f,g \in {\cal S}[a,b]$, the following hold:
\begin{itemize}
\item[{\rm (1)}] $E[I(f)]=\int_a^b f(t) \,d \lambda(t)$,
\item[{\rm (2)}] $E[I(f) I(g)] = \int_a^b f(t)g(t)\,d \r(t)
    +  \int_a^b f(t)\,d \l(t) \int_a^b g(t)\,d \l(t)$,
\item[{\rm (3)}] $E[(I(f))^2]=\int_a^b f(t)^2\,d \r(t)
    + \big(\int_a^b f(t)\,d \l(t)\big)^2$,
\item[{\rm (4)}] $I(f)$ has normal distribution
$N\big(\int_a^b f(t) \,d \lambda(t), \,
\int_a^b f(t)^2 \,d \rho(t)\big)$.
\end{itemize}
\end{theorem}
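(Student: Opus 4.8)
The plan is to treat the four assertions in the order $(1)$, $(2)$, $(4)$, obtaining $(3)$ as the special case $g=f$ of $(2)$. All four reduce to one elementary observation: for a step function the Riemann--Stieltjes integral against $\lambda$ or $\rho$ is a finite sum of increments, which is matched against the moment formulas (\ref{eq:1-1}) and (\ref{eq:1-2}) for $X$. Throughout I would abbreviate $\Delta_i X = X(t_i)-X(t_{i-1})$, $\Delta_i\lambda=\lambda(t_i)-\lambda(t_{i-1})$, and $\Delta_i\rho=\rho(t_i)-\rho(t_{i-1})$. For $(1)$ I would apply linearity of $E$ to $I(f)=\sum_i c_i\Delta_i X$ and use (\ref{eq:1-1}) to get $E[\Delta_i X]=\Delta_i\lambda$, so that $E[I(f)]=\sum_i c_i\Delta_i\lambda$; since $f\equiv c_i$ on $[t_{i-1},t_i)$, this sum is precisely $\int_a^b f\,d\lambda$.

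The substantive step is $(2)$. First I would pass to a common partition: given representations of $f$ and $g$ on possibly different meshes, refine to a single partition $a=t_0<\cdots<t_n=b$ on which $f\equiv c_i$ and $g\equiv d_i$ on $[t_{i-1},t_i)$; this leaves $I(f)$ and $I(g)$ unchanged because $I$ is well defined and linear. Then
\begin{equation*}
E[I(f)I(g)]=\sum_{i,j} c_i d_j\, E[\Delta_i X\,\Delta_j X],
\end{equation*}
and I would compute $E[\Delta_i X\,\Delta_j X]$ by expanding the product into its four terms and inserting (\ref{eq:1-2}) in the form $E[X(s)X(t)]=\rho(\min(s,t))+\lambda(s)\lambda(t)$. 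For $i\neq j$ (say $i<j$) the four $\rho$-contributions cancel, leaving $E[\Delta_i X\,\Delta_j X]=\Delta_i\lambda\,\Delta_j\lambda$; for $i=j$ the same computation yields $E[(\Delta_i X)^2]=\Delta_i\rho+(\Delta_i\lambda)^2$.

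Substituting these back, the pure $\lambda$ terms recombine across the diagonal and off-diagonal parts into $\big(\sum_i c_i\Delta_i\lambda\big)\big(\sum_j d_j\Delta_j\lambda\big)=\int_a^b f\,d\lambda\int_a^b g\,d\lambda$, while the surviving diagonal $\rho$ terms give $\sum_i c_i d_i\Delta_i\rho=\int_a^b fg\,d\rho$, which is exactly $(2)$. Assertion $(3)$ then follows immediately on setting $g=f$. For $(4)$, since $X$ is additive the increments $\Delta_1 X,\dots,\Delta_n X$ over the disjoint subintervals $[t_{i-1},t_i)$ are independent, and each is normal by the defining property of an Yeh process; hence the linear combination $I(f)=\sum_i c_i\Delta_i X$ is again normal, with mean $\int_a^b f\,d\lambda$ by $(1)$ and variance $E[(I(f))^2]-(E[I(f)])^2=\int_a^b f^2\,d\rho$ by $(3)$.

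The main obstacle I anticipate is organizational rather than conceptual: the common-refinement reduction in $(2)$ together with the careful split of the double sum into diagonal ($i=j$) and off-diagonal ($i\neq j$) contributions, and the verification that the two halves of the $\lambda$-part reassemble into a single product of sums. The only genuinely probabilistic input is (\ref{eq:1-2}) (equivalently, independence of increments over disjoint intervals), and the standing bounded-variation hypothesis on $\lambda$ is what makes the Riemann--Stieltjes integral $\int_a^b f\,d\lambda$ well defined for step functions $f$.
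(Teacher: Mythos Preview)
Your proposal is correct and is precisely the argument the paper has in mind: it does not spell out a proof but simply says the result follows from Equations~(\ref{eq:1-1}) and~(\ref{eq:1-2}) together with the ideas in the proof of Lemma~2.3.1 of \cite{Kuo06}, and your write-up is exactly that computation (common refinement, diagonal/off-diagonal split via the covariance formula, and independence of increments for the normality in~(4)). There is nothing to add or correct.
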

 

\par
Next we extend the Wiener integral $I(f)$ from ${\cal S}[a,b]$
to $L_{\lambda, \rho}^{2}[a,b]$. Let $f \in L_{\lambda,
\rho}^{2}[a,b]$. By the denseness of ${\cal S}[a,b]$ in
$L_{\lambda, \rho}^{2}[a,b]$, there exists a sequence
$\{f_n\}_{n \in \Bbb N}$
in $L_{\lambda, \rho}^{2}[a,b]$
such that $\lim_{n\to \infty}\|f-f_n\|_{\lambda, \rho}=0$.
Then by the linearity of the mapping $I$ and assertion (3)
of Theorem \ref{thm:2-1}, we have
$$
\aligned
&\|I(f_m)-I(f_n)\|_2^2\\
&\le\int_a^b |f_n(t)-f_m(t)|^2\,d \r(t)
    + \bigg(\int_a^b \big|f_n(t)-f_m(t)\big|\,d |\l|(t)\bigg)^2 \\
&\le\int_a^b |f_n(t)-f_m(t)|^2\,d \r(t)
    + \bigg(\int_a^b \, d |\l|(t)\bigg)
\bigg(\int_a^b \big|f_n(t)-f_m(t)\big|^2\,d |\l|(t)\bigg) \\
&\le \big(1+ |\l|(b)-|\l|(a)\big)\|f_n-f_m\|_{\l,\r}^2.
\endaligned
$$
Hence $\{I(f_n)\}$ is a Cauchy sequence in $L^2(\cab)$ and
so it converges in $L^2(\cab)$.
Define
\begin{equation}    \label{eq:2-3}
I(f)=\lim\limits_{n\to\infty}I(f_n), \quad
\hbox{ in } L^2(\cab).
\end{equation}


\par
It is easy to check that $I(f)$ is independent of the
choice of the sequence $\{f_n\}_{n \in \Bbb N}$. Thus we
can make the following definition.


\begin{definition}
Let $f \in L_{\lambda, \rho}^{2}[a,b]$.
The limit $I(f)$ defined by Equation {\rm (\ref{eq:2-3})}
is called  the {\em Wiener  integral  of $f$ with respect
to the Yeh process  $X(t)$}. The Wiener integral $I(f)$
 will be denoted by
$$
I(f)(\om) =\bigg( \int_a^b f(t) \,dX(t)\bigg)(\om),
   \quad \hbox{\rm for a.s. } \om\in \cab.
$$
\end{definition}


\begin{theorem} \label{thm:a}
The Wiener integral $I(\cdot)$ is a linear mapping
from $L_{\lambda, \rho}^{2}[a,b]$ into $L^2(\Om)$.
Moreover, the assertions {\rm (1), (2), (3)}, and
{\rm (4)} in Theorem {\rm \ref{thm:2-1}} hold for any
$f, g \in L_{\lambda, \rho}^{2}[a,b]$.
\end{theorem}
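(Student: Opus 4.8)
The plan is to argue entirely by density and continuity, pushing the step-function identities of Theorem~\ref{thm:2-1} through the $L^2(\Om)$-limit that defines $I$ on $L_{\l,\r}^{2}[a,b]$. Throughout, for $f,g\in L_{\l,\r}^{2}[a,b]$ I would fix approximating sequences $\{f_n\},\{g_n\}\subset {\cal S}[a,b]$ with $\|f-f_n\|_{\l,\r}\to 0$ and $\|g-g_n\|_{\l,\r}\to 0$; by the construction preceding Equation~(\ref{eq:2-3}) these yield $I(f_n)\to I(f)$ and $I(g_n)\to I(g)$ in $L^2(\Om)$. Two elementary continuity facts will be used repeatedly. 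First, $\|h\|_{\r}\le\|h\|_{\l,\r}$, so $\|\cdot\|_{\l,\r}$-convergence forces $\|\cdot\|_{\r}$-convergence. Second, by the Cauchy--Schwarz inequality with respect to $m_{|\l|}$,
$$
\Big|\int_a^b h\,d\l\Big|\le\int_a^b |h|\,d|\l|
 \le\big(|\l|(b)-|\l|(a)\big)^{1/2}\Big(\int_a^b|h|^2\,d|\l|\Big)^{1/2}
 \le\big(|\l|(b)-|\l|(a)\big)^{1/2}\|h\|_{\l,\r},
$$
so the functional $h\mapsto\int_a^b h\,d\l$ is $\|\cdot\|_{\l,\r}$-continuous (here $|\l|(b)-|\l|(a)<\infty$ since $\l$ has bounded variation).

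Linearity is immediate: $\a f_n+\b g_n$ approximates $\a f+\b g$, and applying the linearity of $I$ already known on ${\cal S}[a,b]$ and passing to the $L^2(\Om)$-limit gives $I(\a f+\b g)=\a I(f)+\b I(g)$; that $I(f)\in L^2(\Om)$ is exactly the completeness statement used to define it. For the moment identities I would pass to the limit in the step-function versions. In (1), $E[I(f_n)]=\int_a^b f_n\,d\l$: the left side tends to $E[I(f)]$ because expectation is continuous on $L^2(\Om)\subset L^1(\Om)$, and the right side tends to $\int_a^b f\,d\l$ by the second estimate above. For (2), $I(f_n)I(g_n)\to I(f)I(g)$ in $L^1(\Om)$ (a product of two $L^2$-convergent sequences), so $E[I(f_n)I(g_n)]\to E[I(f)I(g)]$; on the right, $\int_a^b f_n g_n\,d\r\to\int_a^b fg\,d\r$ since this is the $L_{\r}^{2}$ inner product and $f_n\to f$, $g_n\to g$ in $\|\cdot\|_{\r}$, while the remaining product term converges by (1). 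Identity (3) is the case $g=f$ of (2).

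The hard part will be (4), the normality of $I(f)$, since it concerns the full distribution rather than finitely many moments. Here I would use that $L^2(\Om)$-convergence of $I(f_n)$ to $I(f)$ implies convergence in distribution, while Theorem~\ref{thm:2-1}(4) gives $I(f_n)\sim N(\m_n,\s_n^2)$ with $\m_n=\int_a^b f_n\,d\l$ and $\s_n^2=\int_a^b f_n^2\,d\r$. By the two continuity facts, $\m_n\to\m:=\int_a^b f\,d\l$ and $\s_n^2\to\s^2:=\int_a^b f^2\,d\r$, whence the characteristic functions $\exp\!\big(i\m_n u-\tfrac12\s_n^2 u^2\big)$ converge pointwise to $\exp\!\big(i\m u-\tfrac12\s^2 u^2\big)$. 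By the continuity theorem for characteristic functions the limiting law is $N(\m,\s^2)$ (a point mass at $\m$ when $\s^2=0$), and since a limit in distribution is unique this identifies the law of $I(f)$ and proves (4).
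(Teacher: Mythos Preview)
Your argument is correct and is exactly the density-and-continuity extension that the paper has in mind; indeed, the paper states Theorem~\ref{thm:a} without proof, relying on the construction preceding Equation~(\ref{eq:2-3}) together with Theorem~\ref{thm:2-1}, and your write-up supplies precisely those routine details.
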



\par
In particular, for any $f, g \in L_{\lambda, \rho}^{2}
[a,b]$, we have the following equality which will be
used later.
\begin{equation} \label{eq:2-5}
 E[I(f)I(g)] = \int_a^b f(t) g(t) \,d \rho(t)
  + \int_a^b f(t) \,d \lambda(t) \int_a^b g(t)
  \,d \lambda(t).
\end{equation}


\begin{corollary}
Let $f, g\in L_{\lambda, \rho}^{2}[a,b]$.
Then $\galgal{f,g}_{\rho} = 0$ if and only if
the Gaussian random variables $I(f)$ and $I(g)$
are independent.
\end{corollary}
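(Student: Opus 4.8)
The plan is to reduce the independence assertion to a covariance computation, using the principle that a pair of random variables whose every real linear combination is Gaussian forms a jointly Gaussian vector, and that for such a vector independence is equivalent to vanishing covariance.

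First I would compute the covariance of $I(f)$ and $I(g)$. By assertion (1) of Theorem \ref{thm:2-1}, extended to $L_{\lambda, \rho}^{2}[a,b]$ via Theorem \ref{thm:a}, we have $E[I(f)]=\int_a^b f\,d\lambda$ and $E[I(g)]=\int_a^b g\,d\lambda$. Combining these with Equation (\ref{eq:2-5}) yields
$$
\mathrm{Cov}\big(I(f),I(g)\big)=E[I(f)I(g)]-E[I(f)]E[I(g)]=\int_a^b f(t)g(t)\,d\rho(t)=\galgal{f,g}_{\rho},
$$
so that the covariance is exactly the $L_\rho^2$ inner product: the product-of-means term in (\ref{eq:2-5}) is cancelled precisely by $E[I(f)]E[I(g)]$.

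Next I would establish that $(I(f),I(g))$ is a jointly Gaussian random vector. For any $\alpha,\beta\in\Bbb R$, linearity of the Wiener integral gives $\alpha I(f)+\beta I(g)=I(\alpha f+\beta g)$, and since $L_{\lambda,\rho}^2[a,b]$ is a linear space, $\alpha f+\beta g$ lies in it; by assertion (4) of Theorem \ref{thm:a} the right-hand side is then a one-dimensional normal random variable. Because every real linear combination of the two components is univariate Gaussian, the vector $(I(f),I(g))$ is jointly Gaussian by definition. Finally I would invoke the standard fact that the components of a jointly Gaussian vector are independent if and only if they are uncorrelated: if $\galgal{f,g}_{\rho}=0$ then the covariance computed above vanishes, so $I(f)$ and $I(g)$ are independent; conversely, if they are independent then, having finite second moments by assertion (3), they are uncorrelated, whence $\galgal{f,g}_{\rho}=\mathrm{Cov}(I(f),I(g))=0$.

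The computation is routine; the one point requiring care is the joint Gaussianity, since uncorrelated Gaussian random variables need not be independent unless the pair is jointly Gaussian. I therefore emphasize deriving joint normality from the linearity of $I$ together with assertion (4), rather than relying only on the marginal normality of $I(f)$ and $I(g)$.
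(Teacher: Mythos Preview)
Your argument is correct. The paper states this corollary without proof, treating it as an immediate consequence of Theorem~\ref{thm:a} and Equation~(\ref{eq:2-5}); your write-up supplies precisely the standard reasoning the paper leaves implicit, namely the covariance computation $\mathrm{Cov}(I(f),I(g))=\galgal{f,g}_\rho$ together with joint Gaussianity obtained from linearity of $I$ and assertion~(4).
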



\par
The next theorem relates the Wiener integral of
a function $f$ of bounded variation to the pathwise
Riemann-Stieltjes integral of $f$. 
Using the same ideas as in the proof of Theorem 2.3.7 in \cite{Kuo06},
we have the following theorem.


\begin{theorem}
Let $f$ be a function of bounded variation on $[a,b]$.
Then
$$
I(f)(\om)=(RS)\int_a^b f(t)\,dX(t, \om), \quad
 {\rm ~a.s.~} \om \in \cab,
$$
where the right hand side is a Riemann-Stieltjes integral
for each sample path of $X(t)$.
\end{theorem}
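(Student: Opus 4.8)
The plan is to show that the $L^2$-limit defining the Wiener integral $I(f)$ coincides almost surely with the pathwise Riemann--Stieltjes integral when $f$ has bounded variation. The natural bridge between the two objects is integration by parts. For the pathwise integral, since $f$ is of bounded variation and each sample path $t\mapsto X(t,\om)$ is continuous, the Riemann--Stieltjes integral $(RS)\int_a^b f(t)\,dX(t,\om)$ exists for almost every $\om$, and the classical integration-by-parts formula gives
\begin{equation*}
(RS)\int_a^b f(t)\,dX(t,\om) = f(b)X(b,\om) - f(a)X(a,\om) - (RS)\int_a^b X(t,\om)\,df(t).
\end{equation*}
The key point here is that the integrator $f$ is now deterministic and of bounded variation, so the last integral is an ordinary Riemann--Stieltjes integral of a continuous path against a fixed signed measure $df$, and hence is an honest $\om$-by-$\om$ object.

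Next I would set up the analogous decomposition on the stochastic side. Choose a sequence of partitions $a=t_0^{(n)}<t_1^{(n)}<\cdots<t_{k_n}^{(n)}=b$ with mesh tending to zero, and let $f_n$ be the step function taking the value $f(t_{i-1}^{(n)})$ on $[t_{i-1}^{(n)}, t_i^{(n)})$. Then $f_n\to f$ in $L_{\lambda,\rho}^2[a,b]$ (using that $f$, being of bounded variation and continuous-from-the-right up to finitely many jumps, is approximated by its step functions in the $\|\cdot\|_{\lambda,\rho}$ norm), so by the very definition \eqref{eq:2-3} of the extended integral, $I(f_n)\to I(f)$ in $L^2(\Om)$. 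On the other hand, the Riemann--Stieltjes sums $\sum_i f(t_{i-1}^{(n)})\big(X(t_i^{(n)},\om)-X(t_{i-1}^{(n)},\om)\big)$ are exactly $I(f_n)(\om)$ by the definition of the Wiener integral on step functions, and these sums converge pathwise to $(RS)\int_a^b f(t)\,dX(t,\om)$ for a.e.\ $\om$ because $f$ is of bounded variation and $X(\cdot,\om)$ is continuous. Thus $I(f_n)$ converges in $L^2(\Om)$ to $I(f)$ and converges almost surely to the Riemann--Stieltjes integral.

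To conclude, I would invoke the standard fact that if a sequence converges in $L^2(\Om)$ to one limit and converges almost surely to another, then the two limits agree almost everywhere. Concretely, $L^2$-convergence of $I(f_n)$ yields a subsequence converging to $I(f)$ almost surely; along that subsequence the Riemann--Stieltjes sums also converge to $(RS)\int_a^b f(t)\,dX(t,\om)$; hence $I(f)(\om)=(RS)\int_a^b f(t)\,dX(t,\om)$ for a.e.\ $\om$, which is the desired identity.

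The main obstacle I anticipate is the pathwise convergence of the Riemann--Stieltjes sums and the verification that $f_n\to f$ in the $\|\cdot\|_{\lambda,\rho}$ norm, both of which rest on controlling $f$ near its (at most countably many) discontinuities. For a general function of bounded variation the Riemann--Stieltjes integral against a continuous integrator is well defined, but one must make sure the chosen tag points and the jump set of $f$ do not interfere with convergence; handling the set of discontinuities of $f$ carefully, and ensuring the approximating step functions have their breakpoints refining a common sequence, is the delicate part. Everything else is the routine transfer between $L^2$-convergence and almost-sure convergence via subsequences, which is why the theorem follows by the same ideas as Theorem 2.3.7 in \cite{Kuo06}.
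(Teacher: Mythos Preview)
Your proposal is correct and follows the same approach as the paper, which simply defers to Theorem 2.3.7 in \cite{Kuo06}: approximate $f$ by left-endpoint step functions $f_n$ along partitions with mesh tending to zero, observe that $I(f_n)$ is simultaneously a Riemann--Stieltjes sum converging pathwise to $(RS)\int_a^b f\,dX$ (since $f$ is BV and $X(\cdot,\om)$ is continuous) and an approximation converging to $I(f)$ in $L^2(\Om)$, and then identify the two limits via an a.s.-convergent subsequence. Your integration-by-parts paragraph is not strictly needed for the argument but is the standard way to see that the pathwise RS integral exists; the only point worth noting for the present setting is that the convergence $f_n\to f$ in $L_{\lambda,\rho}^2[a,b]$ uses that $\lambda$ is \emph{continuous} (so $d|\lambda|$ has no atoms) and $d\rho$ is equivalent to Lebesgue measure, which together ensure the countable discontinuity set of $f$ is null for both measures and bounded convergence applies.
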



\setcounter{equation}{0}
\section{Properties of Wiener integrals}

It is well known that a Brownian motion $B(t)$ is a
martingale with respect to the filtration
$\{{\cal F}_t^{B} : t\geq 0\}$ given by
${\cal F}_t^{B} =\sigma\{B(s) : a\le s\le t\}$.
Moreover, for any $f\in L^2[a,b]$, the stochastic
process
$$
M(t)=\int_a^t f(s) \,dB(s), \quad t\in [a,b],
$$
is also a martingale with respect to $\{ {\cal F}_t^{B}\}$.
However, an Yeh process $X(t)$ determined by $\lambda$
and $\rho$ may not be a martingale with respect to the
filtration $ {\cal F}_t =\sigma\{X(s) : a\le s\le t\},\,
a\leq t\leq b$. In fact,
for any $ a \leq s \leq t\leq b$, we have
$$
 E[X(t)|{\cal F}_s]  = E [ X ( t)-X(s) ]+ X ( s)
 =\lambda (t)-\lambda (s)+ X(s).
$$
Hence if $\lambda$ is an increasing function on $[a,b]$,
then $X(t)$  is
a submartingale with respect to $\{{\cal F}_t\}$. But if
$\lambda$ is a decreasing function on $[a,b]$, then
$X(t)$ is a supermartingale with respect to $\{{\cal F}_t\}$.


\begin{theorem} \label{thm:3-1}
Suppose the mean function $\lambda$ of an Yeh process
$X(t),\, a\leq t\leq b$, is increasing on $[a,b]$ and let
$f\in  L_{\lambda, \rho}^{2}[a,b] $ be a nonnegative
function. Then the stochastic process
\begin{equation}\label{eq:3-1}
M(t)=\int_a^t f(s) \,dX(s), \quad a\le t \le b,
\end{equation}
is a submartingale with respect to the filtration
$\{{\cal F}_t : a\leq t\leq b\}$ defined by
${\cal F}_t=\sigma\{X(s): a\le s\le t\}, \, a\leq t\leq b$.
\end{theorem}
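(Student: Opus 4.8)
The plan is to verify the three defining properties of a submartingale for $\{M(t)\}$: adaptedness, integrability, and the conditional-expectation inequality $E[M(t)\mid\mathcal F_s]\ge M(s)$ for $a\le s\le t\le b$. Adaptedness and integrability are immediate from the construction. Writing $M(t)=I(f\,1_{[a,t)})$, the random variable $M(t)$ is the $L^2(\Omega)$-limit of $I$ applied to step functions supported on $[a,t)$, i.e.\ of finite linear combinations of increments $X(u_i)-X(u_{i-1})$ with $u_i\in[a,t]$; passing to an a.s.\ convergent subsequence shows $M(t)$ is $\mathcal F_t$-measurable. By assertion (4) of Theorem \ref{thm:a}, $M(t)$ is Gaussian, hence lies in $L^2(\Omega)\subset L^1(\Omega)$.

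The substance is the conditional-expectation step. Fix $a\le s\le t\le b$. By linearity of $I$ (Theorem \ref{thm:a}) and $f\,1_{[a,t)}=f\,1_{[a,s)}+f\,1_{[s,t)}$, we have
$$
M(t)=M(s)+\int_s^t f(u)\,dX(u),
$$
where $\int_s^t f\,dX:=I(f\,1_{[s,t)})$ and $M(s)$ is $\mathcal F_s$-measurable. The key claim is that $\int_s^t f\,dX$ is independent of $\mathcal F_s$. To see this I would exploit the Gaussian structure: for any $a\le r_1<\cdots<r_k\le s$ the vector $(\int_s^t f\,dX,\,X(r_1),\dots,X(r_k))$ is jointly Gaussian, since each coordinate is an $L^2$-limit of linear combinations of values of $X$. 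It therefore suffices to check that the covariances vanish. Using $\mathrm{Cov}(I(g),I(h))=\langle g,h\rangle_\rho$ (a consequence of (\ref{eq:2-5}) together with assertion (1)) with $g=f\,1_{[s,t)}$ and $h=1_{[a,r)}$, and the disjointness of $[s,t)$ and $[a,r)$ for $r\le s$, gives $\mathrm{Cov}(\int_s^t f\,dX,\,X(r)-X(a))=0$; a direct computation from (\ref{eq:1-2}) gives $\mathrm{Cov}(X(a),X(u)-X(v))=\rho(a)-\rho(a)=0$ for $s\le v\le u\le t$, so also $\mathrm{Cov}(\int_s^t f\,dX,\,X(a))=0$. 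Hence $\mathrm{Cov}(\int_s^t f\,dX,\,X(r))=0$ for every $r\le s$, and joint Gaussianity upgrades this to independence of $\int_s^t f\,dX$ from every finite collection $\{X(r_j)\}$, and thus from $\mathcal F_s=\sigma\{X(r):a\le r\le s\}$. (Equivalently, one may invoke the independent-increments---``additive''---property of the Yeh process.)

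Granting independence, I would conclude
$$
E\Big[\int_s^t f(u)\,dX(u)\,\Big|\,\mathcal F_s\Big]
 =E\Big[\int_s^t f(u)\,dX(u)\Big]
 =\int_s^t f(u)\,d\lambda(u),
$$
the last equality by assertion (1) of Theorem \ref{thm:a} applied to $f\,1_{[s,t)}$. Since $f\ge0$ and $\lambda$ is increasing, the Lebesgue--Stieltjes measure $d\lambda$ is nonnegative on $[s,t]$, so $\int_s^t f\,d\lambda\ge0$. Therefore
$$
E[M(t)\mid\mathcal F_s]=M(s)+\int_s^t f(u)\,d\lambda(u)\ge M(s)\quad\text{a.s.},
$$
which is the submartingale inequality.

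The main obstacle is the independence claim in the second paragraph: one must be careful that $\mathcal F_s$ is generated by the \emph{values} $X(r)$ (not merely by the increments), so the possibly non-degenerate initial value $X(a)$ has to be treated separately, and one must justify that the zero-covariance/independence property survives the $L^2$-limit defining $I(f\,1_{[s,t)})$. The Gaussian framework, together with the covariance identity coming from (\ref{eq:2-5}), makes both points routine.
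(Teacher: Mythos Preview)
Your argument is correct and a bit slicker than the paper's. Both proofs split $M(t)=M(s)+\int_s^t f\,dX$ and aim to show $E\big[\int_s^t f\,dX\,\big|\,\mathcal F_s\big]\ge 0$; the difference is where the approximation is absorbed. The paper first handles nonnegative step functions (where the increments $X(t_i)-X(t_{i-1})$ are manifestly independent of $\mathcal F_s$), and then passes to general $f$ by an $L^2$-approximation \emph{at the level of conditional expectations}, invoking the conditional Jensen inequality and extracting an a.s.\ convergent subsequence. You instead push the approximation into the claim that $\int_s^t f\,dX$ is itself independent of $\mathcal F_s$: since it is an $L^2$-limit of sums of increments over $[s,t]$, it is measurable with respect to $\sigma\{X(u)-X(s):s\le u\le t\}$, which is independent of $\mathcal F_s$ by additivity. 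This yields the exact identity $E[M(t)\mid\mathcal F_s]=M(s)+\int_s^t f\,d\lambda$, from which the submartingale inequality and the companion super/submartingale statements for the other sign combinations follow immediately, without a separate limiting argument. Your explicit verification of adaptedness is also a point the paper leaves implicit. One caution on your covariance route: joint Gaussianity of $(\int_s^t f\,dX,\,X(r_1),\dots,X(r_k))$ presupposes that $X$ is a Gaussian process, which the definition guarantees only for increments; the independent-increments alternative you mention avoids this and is the cleaner justification.
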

\begin{proof}
First we show that $E|M(t)|<\infty$ for all $t\in [a,b]$
in order to take conditional expectation of $M(t)$.
Apply Equation (\ref{eq:2-5}) with $f=g$ to get
\begin{eqnarray*}
E\big[\big|M(t)\big|^2\big]
& \mmm=\mmm&\int_a^t  f(s)^2 \,d \rho(s) +\bigg(\int_a^t f(s)
\,d \lambda(s)\bigg)^2\\
& \mmm\le \mmm & \int_a^b  f(s)^2 \,d \rho(s) +\bigg(\int_a^b f(s)
\,d \lambda (s)\bigg)^2.\\
\end{eqnarray*}
Hence $E|M(t)|\le \big\{E[|M(t)|^2]\big\}^{1/2}<\infty$.
Next we need to show that
\begin{equation} \label{eq:3-a}
 E[M(t)| {\cal F}_s]\ge M(s), \quad {\rm almost~ surely},
\end{equation}
for any $a\leq s\le t\leq b$. Note that for any $s<t$,
$$
M(t)=M(s) + \int_s^t f(u) \,dX(u).
$$
Hence we have
$$
E[M(t)| {\cal F}_s]=M(s)
+E \bigg[\int_s^t f(u) \,dX(u)\bigg| {\cal F}_s\bigg].
$$
Thus in order to prove Equation (\ref{eq:3-a}),
it suffices to show that for any $s\le t$,
\begin{equation} \label{eq:3-2}
E \bigg[\int_s^t f(u) \,dX(u)\bigg| {\cal F}_s\bigg] \ge 0.
\end{equation}


\par
First suppose $f$ is a nonnegative  step function
represented by
$$
 f=\sum_{i=1}^n c_i 1_{[t_{i-1}, t_i)},
$$
where $t_0=s$ and $t_n=t$. In this case, we have
$$
\int_s^t f(u) \,dX(u)
=\sum\limits_{i=1}^n c_i \big(X(t_{i})-X (t_{i-1}) \big),
\quad c_i\ge0.
$$
But $X(t_i)-X(t_{i-1})$, $i=1,\ldots,n,$ are all independent
of the $\sigma$-field ${\cal F}_s$. Hence
\begin{eqnarray*}
E \bigg[\int_s^t f(u) \,dX(u)\bigg| {\cal F}_s\bigg]
&\mmm=\mmm& \sum\limits_{i=1}^n c_i E\big[X(t_i)-X(t_{i-1})
\big|{\cal F}_s\big]\\
&\mmm=\mmm&\sum\limits_{i=1}^n c_i E\big[X(t_i)-X(t_{i-1})\big] \\
&\mmm=\mmm &\sum\limits_{i=1}^n c_i\big(\lambda(t_i)-\lambda(t_{i-1})\big).
\end{eqnarray*}
Thus Equation (\ref{eq:3-2}) holds for any
nonnegative  step function $f$.


\par
Next suppose $f\in L_{\lambda, \rho}^{2}[a, b]$ and $f\ge 0$.
Choose a sequence $\{f_n\}_{n=1}^{\infty}$ of nonnegative
step functions
converging to $f$ in $ L_{\lambda, \rho}^{2}[a, b]$
monotonically. Then by the conditional Jensen's inequality,
we have the inequality
$$
 \big| E[X|{\cal F}]\big|^2\le E[X^2 |{\cal F}],
$$
which implies that
\begin{eqnarray*}
\lefteqn{ \bigg|  E \bigg[\int_s^t \big(f_n (u)
     -f(u)\big)\,dX(u)\bigg| {\cal F}_s\bigg] \bigg|^2}\\
&\le \mmm & E \bigg[\bigg(\int_s^t \big(f_n (u)-f(u)\big)
\,dX(u)\bigg)^2\bigg|
{\cal F}_s\bigg] .
\end{eqnarray*}
Moreover, we use the property $E\big[E[X|{\cal F}]\big]=E[X]$
of conditional expectation and then apply Equation
(\ref{eq:2-5}) with $f=g$ to get
\begin{eqnarray*}
\lefteqn{ E\bigg[\bigg|
E \bigg[\int_s^t \big(f_n (u)-f(u)\big)\,dX(u) \bigg|
   {\cal F}_s\bigg] \bigg|^2\bigg]}\\
&\le\mmm & E\bigg[ E \bigg[\bigg(\int_s^t
\big(f_n (u)-f(u)\big)\,dX(u)\bigg)^2\bigg|
   {\cal F}_s\bigg] \bigg]\\
& =\mmm &\int_s^t \big(f_n (u)-f(u)\big)^2 \,d \rho(u)
   +\bigg(\int_s^t \big(f_n (u)-f(u)\big)\,d \lambda(u)\bigg)^2\\
&\le \mmm & \int_a^b \big(f_n (u)-f(u)\big)^2 d\rho(u)
   +\bigg(\int_a^b \big|f_n (u)-f(u)\big|\,d |\lambda|(u)\bigg)^2\\
&\le \mmm & \big(1+|\l|(b)-|\l|(a)\big)\|f_n-f_m\|_{\l,\r}^2\\
\noalign{\vskip .04 true in}
& \to \mmm & 0,
\end{eqnarray*}
as $n\to \infty$.
This shows that the sequence $E [\int_s^tf_n (u)\,dX(u)|
{\cal F}_s ], \,n\geq 1,$ of random variables converges to
$E [\int_s^t f(u)\,dX(u)| {\cal F}_s ]$ in $L^2 (\cab)$.
Note that the convergence of a sequence in $L^2 (\cab)$
implies convergence in probability, which implies the
existence of a subsequence converging  almost surely.
Thus by choosing a subsequence, if necessary, we
conclude that the following equality holds with
probability one,
\begin{equation}\label{eq:3-3}
\lim\limits_{n\to \infty}
  E \bigg[\int_s^tf_n (u)\,dX(u)\bigg| {\cal F}_s \bigg]
= E \bigg[\int_s^tf  (u)\,dX(u)\bigg| {\cal F}_s \bigg].
\end{equation}
But $E[\int_s^t f_n(u) \,dX(u) | {\cal F}_s ] \ge 0$
since we have already shown that Equation (\ref{eq:3-2})
holds for nonnegative step functions.
Hence by Equation (\ref{eq:3-3}),
$$
E\bigg[\int_s^tf  (u)\,dX(u)\bigg| {\cal F}_s \bigg]\ge 0,
$$
which shows that the inequality in Equation (\ref{eq:3-2})
holds for any
nonnegative function $f$ in
$ L_{\lambda, \rho}^{2}[a,b]$.
\end{proof}


\par
From the proof of the above theorem, we get
the following assertion under various conditions on the
mean function $\l(t)$ and the integrand $f(t)$:
\begin{itemize}
\item[(1)] If the mean function $\lambda(t)$ of an Yeh process $X(t)$
is increasing  on $[a,b]$ and $f \in L_{\lambda, \rho}^{2}[a,b]$
is nonpositive,
then  the stochastic process $M(t)$ given by Equation (\ref{eq:3-1})
is a supermartingale.
\item[(2)] If the mean function $\lambda(t)$ of an Yeh process $X(t)$
is decreasing on $[a,b]$ and $f \in L_{\lambda, \rho}^{2}[a,b]$ is
nonnegative,
then the stochastic process $M(t)$ given by Equation (\ref{eq:3-1})
is a supermartingale.
\item[(3)] If the mean function $\lambda(t)$ of an Yeh process $X(t)$
 is decreasing on $[a,b]$ and $f \in L_{\lambda, \rho}^{2}[a,b]$
 is nonpositive,
then the stochastic process $M(t)$ given by Equation (\ref{eq:3-1})
is a submartingale.
\end{itemize}


\par
In Theorem \ref{thm:3-1} and the above assertions (1),
(2), and (3), the condition on the positivity or
negativity of the integrand $f$ is necessary.
For example, consider the case $\lambda(t)=t$ on
$[0,1]$. Let $f$ be the following step function
$$
 f(t) = \left\{
\begin{array}{ll}
  1/2, & \mbox{~if $0\leq t < 1/3$;}  \\
\noalign{\vskip .05 true in}
   -1/2, & \mbox{~if $1/3 \leq t < 2/3$;} \\
\noalign{\vskip .05 true in}
   2, & \mbox{~if $2/3 \leq t \leq 1$.}
\end{array}
\right.
$$
Then we have
\begin{eqnarray*}
 E[M(1/2)| {\cal F}_{1/4}] &\mmm=\mmm &
 M(1/4)- 1/{24} \,< \,M(1/4), \\
\noalign{\vskip .05 true in}
 E[M(3/4)|{\cal F}_{1/4}] & \mmm=\mmm &
 M(1/4)+1/24 \,> \, M(1/4).
\end{eqnarray*}
Thus the stochastic process $M(t)$ in Equation
(\ref{eq:3-1}) given by the above function $f(t)$
is neither a submartingale nor a supermartingale.


\setcounter{equation}{0}
\section{Random series expansion of Wiener integrals}

Let $X(t)$ be an Yeh process with mean function
$\l(t)$ and variance function $\r(t)$. The {\em
centered Yeh process} $\wt X(t)$ is defined by
$$
 \wt X(t) = X(t) - \l(t), \quad a\leq t\leq b.
$$
Thus $\wt X(t)$ is an Yeh process with mean function
$0$ and variance function $\r(t)$. We will use
$\wt I(f)$ to denote the Wiener integral of
$f\in L_{\r}^2[a, b]$ with respect to $\wt X(t)$.
Obviously, we have the equality
$$
 \wt I(f) = I (f) - \int_a^b f(t)\,d\l(t), \quad
  f\in L_{\l, \r}^{ 2}[a, b].
$$
Moreover, by Theorem \ref{thm:a}, $\wt I(f)$ is a
Gaussian random variable and
$$
 E[\wt I(f)] = 0, \quad E[\wt I(f) \wt I(g)]
  = \la f, g\ra_{\r}.
$$
Therefore, $\wt I(f)$ and $\wt I(g)$ are independent
if and only if $\la f, g\ra_{\r}=0$.


\par
Let $\{\phi_n\}_{n=1}^{\infty}$ be an orthonormal basis
for the Hilbert space $L_{\rho}^{2}[a,b]$.
Each $f\in L_{\rho}^{2}[a,b]$ has the
 following expansion
\begin{equation}\label{eq:5-4}
f =\sum\limits_{n=1}^{\infty}\galgal{f,\phi_n}_{\rho}\,
 \phi_n.
\end{equation}
Moreover, we have the Parseval identity
$\|f\|_{\rho}^2=\sum_{n=1}^{\infty}
 \galgal{f,\phi_n}_{\rho}^2.
$


\par
If we informally take the Wiener integral with
respect to $\wt X(t)$ in both
sides of Equation (\ref{eq:5-4}), then we would get
\begin{equation}\label{eq:5-5}
\int_a^b f(t) \,d\wt X(t) =\sum\limits_{n=1}^{\infty}
 \galgal{f,\phi_n}_{\rho}
\int_a^b \phi_n (t) \,d\wt X(t).
\end{equation}
We claim that this equality is indeed true in the
$L^2(\cab)$ sense. To prove this claim, use
Equation (\ref{eq:2-5}) to show that
\begin{eqnarray*}
\lefteqn{
E\bigg[\bigg(\wt I(f)
 -\sum\limits_{n=1}^{N}\galgal{f,\phi_n}_{\rho}
    \, \wt I(\phi_n) \bigg)^2\bigg]} \\
&=\mmm & E\bigg[\bigg(
  \wt I\Big(f - \sum\limits_{n=1}^{N}
   \galgal{f,\phi_n}_{\rho}\, \phi_n\Big)
  \bigg)^2\bigg]  \\
&=\mmm & \bigg\|f - \sum\limits_{n=1}^{N}
     \galgal{f,\phi_n}_{\rho}\,\phi_n\bigg\|_{\rho}^2 \\
& \to \mmm & 0,
\end{eqnarray*}
as $N \to 0$. Hence the random series in Equation
(\ref{eq:5-5}) converges in $L^2(\cab)$
to the random variable in the left-hand side of
Equation (\ref{eq:5-5}).
But the $L^2(\cab)$ convergence implies convergence
in probability. On the other hand, note that
the random variables
$\wt I(\phi_n),\, n\geq 1,$ are independent. Hence
we can apply
the L\'evy equivalence theorem (page 173 \cite{ito})
to conclude that
the random series in Equation (\ref{eq:5-5}) converges
almost surely. Thus we have proved the next theorem
for the random series expansion of Wiener integral
with respect to the centered Yeh process
$\wt X(t) = X(t) - \l(t)$.


\begin{theorem}\label{thm:3-3}
Let $\{\phi_n\}_{n=1}^{\infty}$ be an orthonormal basis
for $L_{\rho}^{2}[a,b]$.
Then for each $f\in L_{  \rho}^{2}[a,b]$,
the Wiener integral of $f$ with respect to $\wt X(t)$
has the following random series expansion,
\begin{equation} \label{eq:4-2}
\int_a^b f(t) \,d\wt X(t)=\sum\limits_{n=1}^{\infty}
 \galgal{f,\phi_n}_{\rho}
\int_a^b \phi_n (t) \,d\wt X(t), 
\end{equation}
where the right hand side converges in $L^2(\Om)$ and
almost surely.
\end{theorem}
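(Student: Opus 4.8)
The plan is to establish the two asserted modes of convergence one at a time: I would first prove that the partial sums converge in $L^2(\cab)$, which reduces to a clean isometry identity, and then bootstrap this to almost sure convergence by exploiting the independence of the terms. Write $P_N f=\sum_{n=1}^{N}\galgal{f,\phi_n}_{\rho}\,\phi_n$ for the $N$th Fourier partial sum of $f$ in $L_{\rho}^{2}[a,b]$ and $S_N=\sum_{n=1}^{N}\galgal{f,\phi_n}_{\rho}\,\wt I(\phi_n)$ for the corresponding partial sum of the random series. The one fact I would carry in from earlier is that $\wt I(\cdot)$ is linear with the isometry property $E[\wt I(g)\wt I(h)]=\galgal{g,h}_{\rho}$, which is Equation (\ref{eq:2-5}) applied to the centered process (where $\l\equiv 0$).

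For the $L^2$ statement, linearity of $\wt I$ gives $S_N=\wt I(P_N f)$, so that
\begin{equation*}
E\big[(\wt I(f)-S_N)^2\big]
= E\big[\big(\wt I(f-P_N f)\big)^2\big]
= \big\|f-P_N f\big\|_{\rho}^2 .
\end{equation*}
Since $\{\phi_n\}$ is an orthonormal basis for $L_{\rho}^{2}[a,b]$, the expansion (\ref{eq:5-4}) and the Parseval identity force $\|f-P_N f\|_{\rho}^2\to 0$ as $N\to\infty$. This is exactly the $L^2(\cab)$ convergence of $S_N$ to $\wt I(f)$ claimed in (\ref{eq:4-2}).

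The harder part will be almost sure convergence, and the essential input there is independence rather than mere orthogonality. Each $\wt I(\phi_n)$ is Gaussian, and any finite collection of them is jointly Gaussian (being an $L^2$ limit of linear combinations of the jointly Gaussian increments of $\wt X$); since $E[\wt I(\phi_m)\wt I(\phi_n)]=\galgal{\phi_m,\phi_n}_{\rho}=\delta_{mn}$, the $\wt I(\phi_n)$ are pairwise uncorrelated, hence, by the independence criterion of the Corollary, mutually independent. Therefore the summands $\galgal{f,\phi_n}_{\rho}\,\wt I(\phi_n)$ form a series of independent random variables. The $L^2(\cab)$ convergence already proved yields convergence in probability of $S_N$ to $\wt I(f)$, and I would then invoke the L\'evy equivalence theorem, by which a series of independent random variables converges in probability if and only if it converges almost surely. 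This delivers the almost sure convergence and completes the proof.

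The step I expect to be the genuine obstacle is this last passage from convergence in probability to almost sure convergence: such an implication is false for general summands, so the argument truly rests on independence together with L\'evy's theorem. The point to get right is that one needs \emph{mutual} independence of the $\wt I(\phi_n)$, not just pairwise uncorrelatedness; this is supplied by their joint Gaussianity via the Corollary, and without it the a.s. conclusion would not follow.
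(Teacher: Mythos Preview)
Your proof is correct and follows essentially the same path as the paper: the $L^2(\Om)$ convergence is obtained via linearity of $\wt I$ and the isometry $E[\wt I(g)^2]=\|g\|_\rho^2$, reducing to $\|f-P_Nf\|_\rho^2\to 0$, and almost sure convergence is then deduced by noting that the $\wt I(\phi_n)$ are independent and invoking the L\'evy equivalence theorem. Your justification of independence via joint Gaussianity is slightly more explicit than the paper's, but the argument is otherwise identical.
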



\par
It follows from Equation (\ref{eq:4-2}) that we also
have the equality for Wiener integral with respect
to the Yeh process $X(t)$,
$$
\int_a^b f(t) \,dX(t)= \int_a^b f(t)\,d\l(t) +
\sum\limits_{n=1}^{\infty}
 \galgal{f,\phi_n}_{\rho}
\int_a^b \phi_n (t) \,d\wt X(t).
$$
In particular, take the function $f=1_{[a,t)}$. Then
we have the random series representations of $\wt X(t)$
and  $X(t)$ by:
\begin{eqnarray*}  
 \wt X(t)
  &\mmm=\mmm&\sum\limits_{n=1}^{\infty}
   \bigg(\int_a^t \phi_n (s) \,d \rho(s)\bigg)
    \bigg(\int_a^b \phi_n (s)\,d\wt X(s)\bigg),
  \nonumber\\
X(t) &\mmm=\mmm& \l(t) + \sum\limits_{n=1}^{\infty}
   \bigg(\int_a^t \phi_n (s) \,d \rho(s)\bigg)
    \bigg(\int_a^b \phi_n (s)\,d\wt X(s)\bigg).
\end{eqnarray*}


\par
Note that the sequence $\wt I(\phi_n) =
\int_a^b \phi_n(s)\,d\wt X(s),\, n\geq 1$, is
an independent sequence of standard normal
random variables. Thus, given a function $\rho(t)$
satisfying the conditions in Section \ref{sec:1},
we can consider the random series
$$
\wt X(t) = \sum\limits_{n=1}^{\infty}
   \bigg(\int_a^t \phi_n (s) \,d \rho(s)\bigg)
   \,\xi_n,
$$
where $\{\phi_n : n\geq 1\}$ is an orthonormal
basis for $L_{\rho}^2[a, b]$ and
$\{\xi_n : n\geq 1\}$
is an independent sequence of standard Gaussian random
variables. It can be checked that this random
series indeed converges in $L^2(\Om)$ and almost
surely and that the stochastic process $\wt X(t)$ is
an Yeh process with mean function $0$ and variance
function $\rho(t)$. In addition, if we are also
given a function $\l(t)$ satisfying the conditions
in Section \ref{sec:1}, then the following random
series
$$
 X(t) = \l(t) + \sum\limits_{n=1}^{\infty}
   \bigg(\int_a^t \phi_n (s) \,d \rho(s)\bigg)
   \,\xi_n,
$$
is an Yeh process with mean function $\l(t)$ and
variance function $\rho(t)$.

\bigskip

\centerline{\bf Acknowledgments}
The  author  wishes to express his gratitude
to Professor Hui-Hsiung Kuo for his encouragement
and valuable advice as well as to the Louisiana State
University for its hospitality.


\end{document}